\newtheorem{theorem}{Theorem}
\theoremstyle{plain}
\newtheorem{lemma}[theorem]{Lemma}
\begin{document}
\title[On groups of infinite rank...]{On groups  in which subnormal  subgroups of infinite rank are commensurable
with some normal subgroup}
\author{Ulderico Dardano and Fausto De Mari}

\maketitle

\begin{abstract}
\noindent    We study soluble groups $G$  in which each subnormal
subgroup  $H$ with infinite rank is commensurable with a normal
subgroup, i.e. there exists a normal subgroup $N$ such that $H\cap
N$ has finite index in both $H$ and $N$. We show that if such a
$G$ is periodic, then all subnormal subgroups are commensurable
with a normal subgroup, provided either the Hirsch-Plotkin radical
of $G$ has infinite rank or $G$ is nilpotent-by-abelian (and has
infinite rank).
\end{abstract}

\noindent{{\bf 2020 Mathematics Subject Classification:} Primary:
20F16, Secondary: 20E07, 20E15.
}\\
\smallskip
\noindent  \textbf{Keywords}:\ {\em  transitivity, core-finite,
normal-by-finite, close to normal}

\section{Introduction and statement of results}

A group $G$ is said to be a $T$-group if normality in $G$ is a
transitive relation, i.e. if all subnormal subgroups are normal.
The structure of soluble $T$-groups was well described in the
1960s by Gasch\"{u}tz, Zacher and Robinson (see \cite{RT}). Then,
taking these results as a model, several authors have studied
soluble groups in which subnormal subgroups  have some embedding
property which \lq\lq approximates" normality. In particular,
Casolo \cite{C89} considered ${T}_*$-groups, that is groups in
which any subnormal subgroup $H$ has the property $nn$ (nearly
normal), i.e. the index $|H^G:H|$ is finite. Then Franciosi, de
Giovanni and Newell \cite{FdGN} considered ${T}^*$-groups, that is
groups in which any subnormal subgroup $H$ has the property $cf$
(core-finite, normal-by-finite), i.e. the index $|H:H_G|$ is
finite. Here, as usual, $H^G$ (resp. $H_G$) denotes the smallest
(resp. largest) normal subgroup of $G$ containing (resp. contained
in)  $H$

Recently in \cite{DDM},  in order to put those results in a common
framework, we considered $T[*]$-groups, that is groups in which
each subnormal subgroup $H$  is $cn$, i.e. commensurable with a
normal subgroup of $G$. Recall that two subgroups $H$ and $K$ are
called commensurable if $H\cap K$ has finite index in both $H$ and
$K$, hence both $nn$ and $cf$ imply $cn$. Clearly all the above
results rely on corresponding previous results on groups in which
{\sl all} subgroups are $nn$, $cf$, $cn$ resp. (see \cite{N,BLNSW,
CDR} resp.). A similar approach was adopted in \cite{DR5} where
finitely generated groups in which subnormal subgroups are inert
have been considered, where the term {\em inert} refers to a
different generalization of both $nn$, $cf$ (namely, an inert
subgroup is a subgroup which is commensurable with each of its
conjugates).

In the last decade, several authors have studied the influence on
a soluble group of the behavior of its subgroups of {\sl infinite
rank} (see for instance \cite{DDM1, DM} or the bibliography in
\cite{DFdGMT}). Recall that a group $G$ is said to have {\em
finite rank} $r$ if every finitely generated subgroup of $G$ can
be generated by at most $r$ elements, and $r$ is the least
positive integer with such property and {\em infinite rank} is
there is no such $r$.
 For example,  in \cite{DGMS} it was proved that
 {\em if $G$ is a periodic soluble group of infinite rank in which
 every subnormal subgroup of infinite rank is normal, then $G$ is a $T$-group indeed}.
Then in \cite{DFdGMT}, authors have considered groups of infinite
rank with properties $T_+$  ($T^+$, resp.), that is groups in
which the condition of being $nn$ (resp.  $cf$) is imposed only to
subnormal subgroups {\em with infinite rank}. In fact, it has been
shown that {\em a periodic soluble group of infinite rank $G$ with
property
$T_+$  ($T^+$, resp.) has the full $T_*$  ($T^*$, resp.) property, provided one of the following holds:\\
(A) the Hirsch-Plotkin radical of $G$ has infinite rank, \\ (B)
the commutator subgroup $G'$ is nilpotent.}

In this paper we show that a similar statement is true also for
the property $cn$. Moreover, by a corollary, we give  some further
information about the property $cf$ as well. Let us call
$T[+]$\textit{-group} a group in which each subnormal subgroup of
infinite rank is a $cn$-subgroup.

\bigskip
\noindent {\bf Theorem A}\ \ {\em Let $G$ be a periodic soluble
$T[+]$-group whose Hirsch-Plotkin radical has infinite rank. Then
$G$ is a $T[\ast]$-group. }

\bigskip
\noindent {\bf Corollary} \ \ {\em  Let $G$ be a periodic soluble
$T[+]$-group (resp. $T_+$-group) of infinite rank such that
$\pi(G')$ is finite. Then all subgroups of  $G$ are $cn$ (resp.
$cf$).}

\bigskip
\noindent {\bf Theorem B}\ \ {\em \label{B}Let $G$ be a periodic
$T[+]$-group of infinite rank with nilpotent commutator subgroup.
Then $G$ is a $T[\ast]$-group. }

\medskip
Note that if $G=A\rtimes B$ is the holomorph group of the additive
group $A$ of the rational numbers by the multiplicative group $B$
of positive rationals (acting by usual multiplication), then, as
noticed in \cite{DGMS}, the only subnormal non-normal subgroups of
$G$ are those contained in $A$ (which has rank $1$) so that $G$ is
$T[+]$. However all proper non-trivial subgroups of $A$ are not
$cn$, since if they were $cn$ then they were $cf$ (see
\cite{DDMR}, Proposition 1) contradicting the fact that $A$ is
minimal normal in $G$.

\medskip
Our notation and terminology is standard and can be found in
\cite{R72,R96}

\section{Proofs}

By a standard argument one checks easily that {\em if $H_1$ and
$H_2$ are $cn$- (resp. $cf$-) subgroups of $G$, then $H_1\cap H_2$
is likewise $cn$  (resp. $cf$). The same holds for  $H_1H_2$,
provided this set is a subgroup.}

\begin{lemma}
\label{1}Let $G$ be a $T[+]$-group and let $A$ be a subnormal
subgroup of $G$. If $A$ is the direct product of infinitely many
non-trivial cyclic subgroups, then any subgroup of $A$ is a
$cn$-subgroup of $G$.
\end{lemma}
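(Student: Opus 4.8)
The plan is to push everything, via the defining property of a $T[+]$-group, down to subgroups of finite rank, and then to realise such a subgroup as an intersection of two subnormal subgroups of infinite rank.

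First the easy reductions. A direct product of cyclic subgroups is abelian, so $A$ is abelian; hence every subgroup $X$ of $A$ is normal in $A$ and, since $A$ is subnormal in $G$, subnormal in $G$. If $X$ has infinite rank then $X$ is $cn$ because $G$ is a $T[+]$-group, and there is nothing to do. Thus the real content is that a subgroup $X$ of $A$ of \emph{finite} rank is $cn$; here one uses that $A$ itself has infinite rank, which is forced by the hypothesis whenever infinitely many of the given cyclic factors share a common prime divisor (in particular always if the factors are $p$-groups, which is the situation in the applications).

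For the main construction, fix $X\le A$ of finite rank; it suffices to find subgroups $U,V$ of $A$, each of infinite rank, with $U\cap V=X$. Indeed $U$ and $V$ are then subnormal subgroups of $G$ of infinite rank, hence $cn$ by hypothesis, so $X=U\cap V$ is $cn$ by the standard fact recorded just before this lemma that an intersection of two $cn$-subgroups is $cn$. To obtain $U$ and $V$ it is enough to produce subgroups $W_1,W_2\le A$ of infinite rank with $X+W_1+W_2$ an internal direct sum: then $U:=X+W_1$ and $V:=X+W_2$ work, since inside $X\oplus W_1\oplus W_2$ one has $(X\oplus W_1)\cap(X\oplus W_2)=X$.

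The only substantial step is extracting $W_1$ and $W_2$ from $A$, and this is where I expect the difficulty to lie: a finite-rank subgroup $X$ of $A$ may have infinite support among the cyclic factors, so one cannot merely discard the factors meeting $X$. The remedy is to first replace $A$ by a well-chosen infinite-rank subgroup $D\le A$ which is again a direct sum of cyclic groups and in which every finite-rank subgroup lies in a direct summand of finite rank. Concretely: if infinitely many cyclic factors of $A$ have a common prime divisor $p$ (or are infinite cyclic), take for $D$ the associated elementary abelian (respectively free abelian) subgroup of infinite rank; otherwise choose primes $p_1<p_2<\cdots$ and finite blocks of cyclic factors of $p_k$-power order whose $p_k$-socles have ranks tending to infinity, and let $D$ be the direct sum of these socles — a direct sum of finite elementary abelian $p_k$-groups of pairwise coprime exponents, so that $X\cap D$ is forced to respect the blocks. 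In every case $X\cap D$ has finite rank, so $D=D_0\oplus D_1$ with $D_0\supseteq X\cap D$ of finite rank and $D_1$ of infinite rank; then $X\cap D_1=0$ and $X+D_1$ is direct, and a final splitting $D_1=W_1\oplus W_2$ into two summands of infinite rank finishes the argument. The crux is exactly this last extraction: $D$ must be chosen so that, after carving out the finitely much of $D$ occupied by $X\cap D$, the leftover $D_1$ still has infinite rank and can be halved.
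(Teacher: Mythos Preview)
Your approach is the same as the paper's: reduce to $X$ of finite rank, then exhibit $X$ as the intersection of two subnormal subgroups of infinite rank (hence both $cn$, hence $X$ is $cn$). The paper does this in one line: it asserts a decomposition $A=A_1\times A_2\times A_3$ with each $A_i$ of infinite rank and $X\le A_3$, citing only the Kulikov--type fact that subgroups of direct sums of cyclic groups are again such (Robinson~4.3.16), and then takes $XA_1\cap XA_2=X$. Your construction via the auxiliary subgroup $D$ is a more explicit justification of the same step; it genuinely clarifies the mixed-prime case, where a finite-rank $X$ need not have finite support among the given cyclic factors and so the existence of the $A_i$ is not entirely obvious. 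In the paper's only application (Lemma~2) $A$ is the socle of a periodic abelian group, i.e.\ a direct sum of groups of prime order, and there the decomposition is immediate.

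You are also right to flag that the argument requires $A$ to have infinite rank: a direct sum of cyclic groups of pairwise distinct prime orders has rank~$1$, and then no subgroup of $A$ has infinite rank, so the $T[+]$ hypothesis gives no leverage. The paper's proof tacitly assumes infinite rank when it produces $A_1,A_2$, and in its application this is satisfied; your ``otherwise'' case (choosing primes $p_k$ with socle ranks tending to infinity) likewise needs it. So both proofs cover the lemma under the extra hypothesis that $A$ has infinite rank, which is all that is used later.
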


\begin{proof}
Let $X$ be any subgroup of $A$, then $X$ is a subnormal subgroup
of $G$ and $X$ is likewise a direct product of cyclic groups (see
\cite{R96}, 4.3.16). In order to prove that $X$ is a $cn$-subgroup
of $G$ we may assume that $X$ has finite rank. Then there exist
subgroups $A_{1}$, $A_{2}$, $A_{3}$  of $A$ with infinite rank
such that $X\leq A_{3}$ and $A=A_{1}\times A_{2}\times A_{3}$.
Thus $XA_{1}$ and $XA_{2}$ are subnormal subgroups of infinite
rank, so that they are both $cn$-subgroups of $G$. Therefore
$X=XA_{1}\cap XA_{2}$ is likewise $cn$ in $G$.
\end{proof}

\begin{lemma}
\label{2}Let $G$ be a periodic $T[+]$-group. If $G$ contains an
abelian subnormal subgroup of infinite rank $A$, then $G$ is a
$T[\ast]$-group.
\end{lemma}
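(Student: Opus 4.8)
The plan is to reduce the problem to subnormal subgroups of finite rank and then to realise each of them as an intersection of two subnormal subgroups of infinite rank. Since $G$ is a $T[+]$-group, every subnormal subgroup of infinite rank is already $cn$, so it suffices to prove that an arbitrary subnormal subgroup $H$ of $G$ of finite rank is a $cn$-subgroup; fix such an $H$.

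The key companion object is the Hirsch--Plotkin radical $R:=\mathrm{HP}(G)$. By Baer's theorem the abelian subnormal subgroup $A$ lies in the Baer radical, hence in $R$, so $R$ is a periodic, locally nilpotent, normal subgroup of $G$ of infinite rank. I would use two elementary facts. First, for every $G$-invariant subgroup $N$ of $R$ the set $HN$ is a subgroup (because $N\trianglelefteq G$), it is subnormal in $G$ (being the full preimage of the subnormal subgroup $HN/N$ of $G/N$), and it has infinite rank whenever $N$ does; thus $HN$ is $cn$. Second, if $N_{1}$ and $N_{2}$ are $G$-invariant subgroups of $R$ with $H\cap R\le N_{1}$, then by the modular law $HN_{1}\cap HN_{2}=H(N_{1}\cap HN_{2})$, and a short direct computation gives $N_{1}\cap HN_{2}=(H\cap R)(N_{1}\cap N_{2})$, so that $HN_{1}\cap HN_{2}=H(N_{1}\cap N_{2})$. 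Hence, if moreover $N_{1}$ and $N_{2}$ have infinite rank and $N_{1}\cap N_{2}$ is finite, then $H$ has finite index in $HN_{1}\cap HN_{2}$, which is a $cn$-subgroup (intersections of $cn$-subgroups are $cn$, by the remark opening this section), and therefore $H$ is itself $cn$. The whole problem is thereby reduced to the structural task of decomposing a large $G$-invariant subgroup of $R$ into two $G$-invariant subgroups of infinite rank with finite intersection, one of which contains the finite-rank subgroup $H\cap R$.

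Write $R$ as the direct product of its (characteristic, hence $G$-invariant) primary components $R_{p}$; since $H\cap R$ is the direct product of the subgroups $H\cap R_{p}$ and has finite rank, it is trivial for all but finitely many $p$. If no $R_{p}$ has infinite rank then, $R$ having infinite rank, the ranks of the $R_{p}$ are unbounded, so infinitely many are nontrivial; I would then split the set of primes into two infinite sets, with all primes of $H\cap R$, together with enough others to keep both sides of unbounded rank, placed in the first, and take $N_{1},N_{2}$ to be the corresponding direct factors of $R$. These are $G$-invariant of infinite rank, $N_{1}\cap N_{2}=1$, and $H\cap R\le N_{1}$, so the previous paragraph gives $H=HN_{1}\cap HN_{2}$, a $cn$-subgroup.

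The remaining case — which I expect to be the main obstacle — is that some primary component $R_{p}$ has infinite rank, for then $R_{p}$ need admit no decomposition into two proper $G$-invariant subgroups of infinite rank. Here I would first pass inside $R_{p}$ to a $G$-invariant abelian subgroup of infinite rank (for instance a term of the upper central series of $R_{p}$, all of whose terms are normal in $G$, climbing until one has infinite rank; in general one uses the structure of locally finite $p$-groups of infinite rank to secure such a subgroup), and then to its socle $W$: an elementary abelian normal subgroup of $G$ of infinite rank, all of whose subgroups are $cn$ in $G$ by Lemma~\ref{1}. As $W$ is elementary abelian, $H\cap W$ is finite, so it now suffices to split the infinite-dimensional $\mathbb{F}_{p}G$-module $W$ as a direct sum $W_{1}\oplus W_{2}$ of two $G$-submodules of infinite dimension with $H\cap W\le W_{1}$; then the intersection argument applies with $N_{i}=W_{i}$. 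Producing such a decomposition is the hard point: one has to use the hypothesis — concretely, that every subgroup of $W$ is $cn$ in $G$ — to force $G/C_{G}(W)$ to be tame enough (in the spirit of the classification of groups all of whose subgroups are $cn$, cf. \cite{CDR}) that $W$ is, up to a finite-dimensional piece, a direct sum of infinitely many finite-dimensional $G$-submodules, into which the finite subspace $H\cap W$ can be absorbed on one side. Once this is settled, $H$ is $cn$ in every case, so every subnormal subgroup of $G$ is $cn$, i.e. $G$ is a $T[\ast]$-group.
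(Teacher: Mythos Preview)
Your skeleton is sound --- realise a finite-rank subnormal $H$ as the intersection of two infinite-rank subnormal subgroups obtained by multiplying $H$ by two large normal subgroups with small intersection --- and it is the same skeleton the paper uses. But two of your steps do not go through as written, and you miss the one-line reduction that makes the whole thing clean.

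\medskip
\textbf{The coprime case.} Your claim that $H\cap R$ is trivial for all but finitely many primes is false: finite rank does not bound the number of primary components (consider $\bigoplus_p\mathbb Z/p\mathbb Z$, which has rank~$1$). If $H\cap R$ meets every prime, the requirement $H\cap R\le N_1$ forces $N_2=1$. This is repairable --- when $N_1=R_{\pi_1}$ and $N_2=R_{\pi_2}$ are complementary Hall subgroups of $R$, a Dedekind computation gives $HN_1\cap HN_2=H$ without any hypothesis on $H\cap R$ --- but the argument as stated is wrong.

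\textbf{The $p$-local case.} ``Climb the upper central series of $R_p$'' does not produce a $G$-invariant abelian subgroup of infinite rank in general: locally nilpotent $p$-groups can have trivial centre (McLain's group $M(\mathbb Q,\mathbb F_p)$ is a locally finite, locally nilpotent $p$-group of infinite rank with $Z=1$). Your fallback ``use the structure of locally finite $p$-groups of infinite rank'' is not a mechanism; obtaining such a subgroup is genuinely the crux, and you have not supplied an argument.

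\medskip
The paper bypasses both difficulties with a step you overlook: apply the $T[+]$ hypothesis to $A$ itself. Since $A$ is subnormal of infinite rank it is $cn$, hence commensurable with a normal subgroup $N$; then $N$ is abelian-by-finite of infinite rank and contains a characteristic (hence $G$-normal) abelian subgroup $N_\ast$ of finite index and infinite rank. So one may assume from the outset that $A\trianglelefteq G$. Now the socle $S$ of $A$ is a $G$-invariant direct product of infinitely many groups of prime order, in which every finite-rank subgroup is \emph{finite}. Lemma~\ref{1} makes every subgroup of $S$ a $cn$-subgroup of $G$, and then Lemma~2.8 of \cite{CDR} supplies exactly the decomposition you were reaching for: $G$-invariant subgroups $S_0\le S_1\le S$ with $S_0$ and $S/S_1$ finite and \emph{every} subgroup of $S$ between $S_0$ and $S_1$ normal in $G$. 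After absorbing the finite group $S_0(X\cap S_1)$ into $X$, any splitting $S_1=S_3\times S_4$ into infinite-rank factors consists of normal subgroups of $G$, and $X=XS_3\cap XS_4$ is $cn$. No case division on the primary structure of $R$ is needed.
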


\begin{proof}
By hypothesis there exists a normal subgroup $N$ of $G$ which is
commensurable with $A$. Then $A\cap N$ has finite index in $AN$
and hence $N$ is an abelian-by-finite group of infinite rank. In
particular, $N$ contains a characteristic subgroup\ $N_{\ast}$ of
finite index which is an abelian group of infinite rank; hence
replacing $A$ by $N_{\ast}$ it can be supposed that $A$ is a
normal subgroup. Since $G$ is periodic and $A$ has infinite rank,
it follows that the socle $S$ of $A$ is a normal subgroup of $G$
which is the direct product of infinitely many non-trivial cyclic
subgroups. Application of Lemma \ref{1} yields that all subgroups
of $S$ are $cn$-subgroups of $G$ and hence by Lemma 2.8 of
\cite{CDR}, there exist $G$-invariant subgroups $S_{0}\le S_1$ of
$S$ such that $S_0$ and $S/S_1$ are finite and all subgroups of
$S$ lying between $S_{0}$ ad $S_1$ are normal in $G$

Let $X$ be any subnormal subgroup of finite rank of $G$. Then
$X\cap S_1$ is finite, hence $S_{2}=S_{0}(X\cap S_1)$ is likewise
finite. Since $S_{2}X$ is commensurable with $X$, we may assume
$S_2=\{1\}$. Clearly there exist subgroups $S_{3}$ and $S_{4}$
with infinite rank such that $S_1=S_{3}\times S_{4}$. Since both
$S_{3}$ and $S_{4}$ are normal subgroups of $G$, we have that both
$XS_{3}$ and $XS_{4}$ are subnormal subgroups of infinite rank of
$G$ and hence they are both $cn$. Thus $X=XS_3\cap XS_{4}$ is
likewise a $cn$-subgroup of $G$.
\end{proof}

Recall that any primary locally nilpotent group of finite rank is
a Chernikov group (see \cite{R72} Part 2, p.38).

\begin{lemma}
\label{3}Let $G$ a $T[+]$-group of infinite rank. If $G$ is a Baer
$p$-group, then $G$ is a nilpotent $T[*]$-group.
\end{lemma}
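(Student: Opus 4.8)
The plan is to prove first that $G$ is nilpotent and then to deduce the $T[\ast]$-property from Lemma~\ref{2}. Since a Baer group is locally nilpotent and $G$ is periodic, $G$ is a locally finite $p$-group; observe that $G$ is only assumed to be Baer, not soluble, so nilpotency is genuinely something to be proved and the soluble-group results quoted in the introduction are not directly at our disposal. Because $G$ has infinite rank it is not a Chernikov group, and, by a well-known theorem, a locally finite group which is not a Chernikov group contains an abelian subgroup which is not a Chernikov group; by the remark recalled just before the statement such a subgroup must have infinite rank, and replacing it by its subgroup of elements of order dividing $p$ we obtain an elementary abelian subgroup $V=\langle v_{1}\rangle\times\langle v_{2}\rangle\times\cdots$ of $G$ of infinite rank. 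If we can show that $G$ is nilpotent, then $V$ becomes an abelian \emph{subnormal} subgroup of $G$ of infinite rank and Lemma~\ref{2} gives at once that $G$ is a $T[\ast]$-group, so the whole weight of the argument falls on establishing nilpotency.

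To prove nilpotency I would use the Baer hypothesis through Roseblade's theorem: the normal closure of a subnormal cyclic subgroup is nilpotent, so $G$, being a Baer group, is in fact a Fitting group, the product of its normal nilpotent subgroups. If one of these normal nilpotent subgroups has infinite rank, then it contains an abelian subgroup of infinite rank which is subnormal in $G$, and Lemma~\ref{2} already yields the $T[\ast]$-property; in the contrary case every normal nilpotent subgroup of $G$ has finite rank and hence, again by the remark preceding the statement, is a Chernikov group, so after factoring out the normal abelian join of their divisible parts one is reduced to the case in which $G$ is a union of finite normal subgroups, that is, a periodic $FC$-group. This last case is where I expect the real difficulty to lie: one must exploit the $T[+]$-hypothesis --- for instance, that the large elementary abelian subgroup $V$, and the subnormal subgroups obtained from it by adjoining suitable finite normal subgroups, are all $cn$ --- to force the nilpotency classes (equivalently the defects of the cyclic subgroups) of the finite normal subgroups of $G$ to be bounded, contradicting non-nilpotency. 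Keeping track of ranks and of which normal subgroups the various commensurabilities involve is the delicate point, and the remark on Chernikov subgroups of finite rank will be needed repeatedly to handle the subnormal subgroups of finite rank that arise.

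Once $G$ is known to be nilpotent, the subgroup $V$ is subnormal in $G$, Lemma~\ref{2} applies, and $G$ is a $T[\ast]$-group; together with nilpotency this is precisely the assertion of the Lemma. Should the direct contradiction in the $FC$-case prove cumbersome, an alternative is to first secure the $T[\ast]$-property by any available route --- Lemma~\ref{2} needs only an abelian subnormal subgroup of infinite rank, which the Fitting decomposition makes it natural to search for --- and then deduce nilpotency from the fact that in a Baer $T[\ast]$-group every finite subgroup is commensurable with a finite normal subgroup, combined with the structure theory of periodic $T[\ast]$-groups developed in \cite{DDM}.
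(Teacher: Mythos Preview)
Your main line of argument stalls exactly where you predict it will: the $FC$-case. You outline a plausible strategy (bound the nilpotency classes of the finite normal subgroups by applying the $T[+]$-hypothesis to subgroups built from $V$), but you give no mechanism for extracting such a bound from mere commensurability with a normal subgroup, and I do not see one. Your fallback alternative does not close the gap either: it still hinges on locating an abelian \emph{subnormal} subgroup of infinite rank so that Lemma~\ref{2} applies, and your Fitting decomposition produces one only when some normal nilpotent subgroup already has infinite rank --- precisely the case you have already disposed of. So as written the proposal is incomplete.

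The paper sidesteps this difficulty by reversing the order and, crucially, by not invoking Lemma~\ref{2} at all. It proves the $T[\ast]$-property directly: if $X$ is subnormal of finite rank then $X$ is Chernikov and its divisible part $J$ is an abelian divisible subnormal subgroup of $G$. Now the Baer hypothesis is used sharply: $J^{G}$ is still abelian and divisible, and for every abelian subnormal subgroup $A$ of $G$ the product $J^{G}A$ is nilpotent, which forces $[J,A]=\{1\}$; since a Baer group is generated by its abelian subnormal subgroups, $J\le Z(G)$, and hence $X/X_{G}$ is finite. Combined with the $T[+]$-hypothesis for subnormal subgroups of infinite rank this gives $T[\ast]$, and nilpotency then follows from Proposition~20 of \cite{DDM}. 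The single idea you are missing is this automatic centrality of divisible abelian subnormal subgroups in a Baer group; with it there is no case analysis, no $FC$-reduction, and no need to manufacture a large abelian subnormal subgroup.
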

\begin{proof} Let $X$ be any subnormal subgroup of $G$ with
finite rank. Then $X$ is a Chernikov group  (see \cite{R72} Part
2, p.389) and $X$ contains an abelian divisible normal subgroup
$J$ of finite index. Hence $J$ is subnormal in $G$, and so $J^{G}$
is abelian and divisible (see \cite{R72} Part 1, Lemma 4.46). If
$A$ is any abelian subnormal subgroup of $G$, the subgroup
$J^{G}A$ is nilpotent and $[J,A]=\{1\}$ (see \cite{R72} Part 1,
Lemma~3.13). Since $G$ is generated by its subnormal abelian
subgroups, it follows that $J\leq Z(G)$ and so $X/X_{G}$ is
finite. This proves that $G$ is a $T[\ast ]$-group, hence
nilpotent (see \cite{DDM}, Proposition 20).
\end{proof}

The following lemma is probably well-known but we are not able to
find it in the literature, hence we write also the proof.

\begin{lemma}\label{zf} Let $G$ be a periodic finite-by-abelian
group of finite rank. Then $G/Z(G)$ is finite.
\end{lemma}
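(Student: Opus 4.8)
The plan is to exhibit inside $G$ a divisible abelian normal subgroup that is central and has finite index; this immediately forces $G/Z(G)$ to be finite.

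Since $G$ is finite-by-abelian, say $F\trianglelefteq G$ with $F$ finite and $G/F$ abelian, we have $G'\le F$, so $G'$ is finite. Consequently $G/G'$ is a periodic abelian group of finite rank, hence a Chernikov group; as $G'$ is finite, it follows that $G$ itself is a Chernikov group. Let $D$ be the finite residual of $G$, that is, its divisible part: $D$ is a characteristic (in particular normal) divisible abelian subgroup of $G$ with $G/D$ finite.

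It then remains to show $D\le Z(G)$. Fix $g\in G$ and consider the map $\phi_g\colon D\to G$ given by $\phi_g(d)=[d,g]$. Because $D$ is normal in $G$ we have $[d,g]\in D$ for all $d\in D$, and because $D$ is abelian a direct computation from $[d_1d_2,g]=d_2^{-1}[d_1,g]d_2^{\,g}$ shows that $\phi_g$ is a homomorphism; its image lies in $[D,G]\le G'$, which is finite. But a homomorphic image of a divisible group is divisible, and the only finite divisible group is trivial, so $\phi_g$ is trivial. Hence $g$ centralises $D$, and since $g$ was arbitrary, $D\le Z(G)$. Therefore $G/Z(G)$ is a quotient of the finite group $G/D$ and is finite.

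The argument is short, and the only points requiring care are routine: that $\phi_g$ is genuinely a homomorphism (which uses commutativity of $D$ in an essential way) and that $G$ is Chernikov, so that the desired divisible abelian normal subgroup of finite index is actually available. I expect no serious obstacle here; the content of the lemma is precisely that a divisible abelian normal subgroup cannot contribute to $G/Z(G)$ once the commutator subgroup is finite, and the finite-rank hypothesis enters only to guarantee that what is left over modulo such a subgroup is finite.
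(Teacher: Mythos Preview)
Your argument contains a genuine gap at the point where you assert that a periodic abelian group of finite rank is Chernikov. This is false: the definition of rank used in the paper is the Pr\"ufer (special) rank, and under that definition the group $\bigoplus_{p}\mathbb{Z}/p\mathbb{Z}$ (sum over all primes) is a periodic abelian group of rank~$1$ which is certainly not Chernikov. The correct statement is that a locally nilpotent \emph{$p$-group} of finite rank is Chernikov; no such conclusion is available once infinitely many primes intervene, and nothing in the hypotheses prevents this. Consequently your divisible radical $D$ may well be trivial while $G$ is infinite, and the proof collapses.

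The paper's argument is designed precisely to get around this obstruction. It passes to $C=C_G(G')$, which is nilpotent of finite index, and then splits $C$ as $C_\pi\times C_{\pi'}$ where $\pi=\pi(G')$ is the finite set of primes dividing $|G'|$. The factor $C_\pi$ involves only finitely many primes and is therefore genuinely Chernikov, so your divisible-part argument (or the reference to Robinson) applies to it; the complementary factor $C_{\pi'}$ is abelian outright because its derived subgroup is a $\pi'$-subgroup of the $\pi$-group $G'$. Putting the two pieces together gives $|C:Z(C)|<\infty$, whence $G$ is abelian-by-finite as well as finite-by-abelian, and the conclusion follows. Your commutator-map calculation for $D$ is perfectly fine and is morally the same step the paper uses for $C_\pi$; what is missing from your write-up is the reduction that isolates a subgroup involving only finitely many primes, to which that step can legitimately be applied.
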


\begin{proof}
Clearly $C=C_G(G')$ is a normal subgroup of finite index of $G$
which is nilpotent and has finite rank; in particular, any primary
component of $C$ is a Chernikov group. 
Let $\pi=\pi(G')$ be the set of all primes $p$ such that $G'$
contains some element of order $p$. Then $\pi$ is finite and so
the subgroup $C_{\pi}$ is a Chernikov group; hence
$C_{\pi}Z(G)/Z(G)$ is finite (see \cite{R72} Part 1, Lemma 4.3.1).
On the other hand $C_{\pi'}$ is abelian, and so it follows that
$C/Z(C)$ is finite. Thus $G$ is both abelian-by-finite and
finite-by-abelian and hence $G/Z(G)$ is finite.
\end{proof}

\noindent {\em Proof of Theorem A.} Assume, for a contradiction,
that the statement is false and let $X$ be a subnormal subgroup
$G$ which is not a $cn$-subgroup; in particular, $X$ has finite
rank. Among all counterexamples choose $G$ in such a way that $X$
has the smallest possible derived length. Then the derived
subgroup $Y=X^{\prime}$ of $X$ is a $cn$-subgroup by the minimal
choice on the derived length of $X$; on the other hand, $Y$ has
finite rank and so $Y/Y_{G}$ is finite (see \cite{DDMR},
Proposition 1). Then $X/Y_{G}$ is a finite-by-abelian group of
finite rank, and hence its centre $Z/Y_{G}=Z(X/Y_{G})$ has finite
index in $X/Y_{G}$ by Lemma \ref{zf}. Thus $Z$ is a subnormal
subgroup of $G$ which has finite index in $X$, so that the index
$|Z:Z_{G}|$ is infinite and hence $Z$ cannot be a $cn$-subgroup of
$G$ (see \cite{DDMR}, Proposition 1). Since $Y_{G}$ has finite
rank, the Hirsch-Plotkin radical of $G/Y_{G}$ has infinite rank
and so $G/Y_{G}$ is also a counterexample; thus replacing $G$ with
$G/Y_{G}$ and $X$ with $Z/Y_{G}$ it can be supposed that $X$ is
abelian. Hence $X$ is contained in the Hirsh-Plotkin radical $H$
of $G$.

Let $P$ any primary component of $H$, and suppose that $P$ has
infinite rank. If $F$ is the Fitting subgroup of $P$, then $F$ is
nilpotent by Lemma \ref{3}. Let $A$ be a maximal abelian normal
subgroup of $F$, then $A=C_{F}(A)$ (see \cite{R72} Part 1, Lemma
2.19.1) and so $A$ has infinite rank (see \cite{R72}\ Part 1,
Theorem~3.29). Hence $G$ is a $T[\ast]$-group by Lemma \ref{2}.
This contradiction proves that each primary component of $H$ has
finite rank. In particular, as $H$ has infinite rank, there exist
$H_{1}$ and $H_{2}$ subgroups of infinite rank such that
$H=H_{1}\times H_{2}$ and\ $\pi(H_{1})\cap\pi(H_{2})=\emptyset$.
By the same reason, for $i\in\{1,2\}$, two subgroups of infinite
rank $H_{i,1}$ and $H_{i,2}$ can be found such that
$H_{i}=H_{i,1}\times H_{i,2}$ and
$\pi(H_{i,1})\cap\pi(H_{i,2})=~\emptyset$. If $i,j\in\{1,2\}$ and
$i\neq j$, considered $\pi_{i}=\pi(H_i)$ and
denoted by $X_{i}$ the $\pi_{i}$-component of $X$, the subgroups $X_{i}%
H_{j,1}$ and $X_{i}H_{j,2}$ are subnormal subgroups of infinite
rank of $G$, so that they are both $cn$-subgroups of $G$ and hence
$X_{i}=X_{i}H_{j,1}\cap X_{i}H_{j,2}$ is likewise a $cn$-subgroup
of $G$. Therefore $X=X_{1}X_{2}$ is a $cn$-subgroup of $G$ and
this final contradiction concludes the proof. \qed
\\

\noindent {\em Proof of Corollary.} One may refine the derived
series of $G'$ to a series $G_1=G'\ge\cdots\ge G_n=\{1\}$ whose
factors $A_i=G_{i}/G_{i+1}$ are $p$-groups (for possibly different
primes). Let  $C_i=C_G(A_i)$ for each~$i$. If $A_i$ has finite
rank, then $A_i$ is a Chernikov group and the same holds for
$G/C_i$ as a periodic group of automorphisms of a Chernikov group
(see \cite{R72} Part 1, Theorem 3.29).
 If $A_i$ has infinite rank, then Lemma \ref{2} yields that each subgroup of  $A_i$ is a $cn$-subgroup (resp $cf$-) of $G$.
 Hence, according to Proposition 14 in \cite{DDM}, $G/C_i$ is finite as a periodic group of power automorphisms of $p$-groups
  (see \cite{RT}, Lemma 4.1.2). Thus if $C$ is the intersection of all $C_i$'s, then $G/C$ is a Chernikov group and therefore has finite rank.
  It follows that $C$ has infinite rank. Om the other hand, $C$ is nilpotent by a well-known fact (see \cite{H}).
  Then by Theorem $A$, the group $G$ has property $T[*]$ (resp. $T_*$). Further, by Theorem~15 in \cite{DDM}, $G$ all subgroups are $cn$ (resp. $cf$).
\qed

\medskip
\noindent {\em Proof of Theorem B.} Assume that the statement is
false. As in the first part of proof of Theorem~A, there exists a
counterexample $G$ containing an abelian subnormal subgroup $X$
that is not a $cn$-subgroup; in particular, $X$ has finite rank
and the index $|X:X_{G}|$ is infinite. Then $L=XG^{\prime}$ is a
nilpotent normal subgroup and hence has finite rank by Theorem A.
Let $p\in\pi (X)$, then $L/L_{p^{\prime}}$ is a nilpotent
$p$-group of finite rank and hence it is a Chernikov group; thus
$G/C_{G}(L/L_{p^{\prime}})$ is finite (see
\cite{R72} Part 1, Corollary p.85) and hence $C_{G/L_{p^{\prime}}%
}(L/L_{p^{\prime}})$ is a nilpotent normal subgroup of infinite
rank of $G/L_{p^{\prime}}$. Thus Theorem A yields that
$G/L_{p^{\prime}}$ is a $T[\ast]$-group. Therefore
$X_{p}L_{p^{\prime}}$ is a $cn$-subgroup of $G$, and hence it is
even $cf$ because it has finite rank (see \cite{DDMR}, Proposition
1). The $p$-component of the core $(X_{p}L_{p^{\prime}})_{G}$ of
$X_{p}L_{p^{\prime}}$in $G$ is $G$-invariant, it coincides with
the subgroup $X_{p}\cap(X_{p}L_{p^{\prime}})_{G}$ and so has
finite index in $X_{p}$, therefore $X_{p}$ is $cf$. In particular,
the set $\pi$ of all primes $p$ in $\pi(X)$ such that $X_{p}$ is
not normal in $G$ is infinite. Replacing $G$ by
$G/L_{\pi^{\prime}}$ it can be supposed that $\pi=\pi(L)$. Then
there exists an infinite subset $\pi_{0}$ of $\pi$ such that
$G/L_{\pi_{0}^{\prime}}$ contains a nilpotent normal subgroup of
infinite rank (see \cite{DFdGMT}, Corollary 11); hence
$G/L_{\pi_{0}^{\prime}}$ is a $T[\ast]$-group by Theorem A.
Therefore $X_{\pi_{0}}L_{\pi_{0}^{\prime}}$ is a $cn$-subgroup of
$G$ and so even a $cf$-subgroup (see \cite{DDMR}, Proposition 1);
hence $X_{\pi_{0}}$ is $cf$ and this is a contradiction because
$X_{p}$ is not normal in $G$ for each $p\in\pi_{0}$. \qed

\bigskip

\bigskip

\bigskip\noindent
{\small
{Ulderico Dardano, Fausto De Mari\\
{Dipartimento di Matematica e
Applicazioni ``R.Caccioppoli'', }\\
{Universit\`a di Napoli ``Federico
II'', \\ Via Cintia - Monte S. Angelo, I-80126 Napoli, Italy}\\
{ email: dardano@unina.it , fausto.demari@unina.it }
}

\end{document}